\newcommand{\R}{\mathbb{R}}
\newtheorem{theorem}{Theorem}[section]
\newtheorem{corollary}{Corollary}[section]
\newtheorem{definition}{Definition}[section]
\newtheorem{remark}{Remark}[section]
\newtheorem{example}{Example}[section]
\newcommand{\p}{\partial}
\newcommand{\bb}{\begin{equation}}
\newcommand{\ee}{\end{equation}}
\newcommand{\ba}{\begin{array}}
\newcommand{\ea}{\end{array}}
\newcommand{\f}{\frac}
\newcommand{\ds}{\displaystyle}
\newcommand{\eps}{\varepsilon}
\newcommand{\al}{\alpha}
\newcommand{\uu}{{\bf u}}
\newcommand{\s}{\mathbb{S}}
\newcommand{\B}{{\cal B}}
\numberwithin{equation}{section}
\title{Unique continuation results for abstract quasi-linear evolution equations in Banach spaces}
\author[1 \thanks{igor.freire@ufscar.br and igor.leite.freire@gmail.com}] {Igor Leite Freire}
\affil[1]{Departamento de Matemática, Universidade Federal de São Carlos\\

Rodovia Washington Luís, Km 235, 13565-905,
São Carlos-SP, Brasil}
\begin{document}
\maketitle

\begin{abstract}

Unique continuation properties for a class of evolution equations defined on Banach spaces are considered from two different point of views: the first one is based on the existence of conserved quantities, which very often translates into the conservation of some norm of the solutions of the system in a suitable Banach space. The second one is regarded to well-posed problems. Our results are then applied to some equations, most of them describing physical processes like wave propagation, hydrodynamics, and integrable systems, such as the $b-$; Fornberg-Whitham; potential and $\pi-$Camassa-Holm; generalised Boussinesq equations; and the modified Euler-Poisson system.

\end{abstract}

{\bf MSC classification 2010:} 35A01, 74G25, 37K40, 35Q51.

\keywords{Conserved quantities \and Unique continuation of solutions \and Local well-posedness}
\newpage
\tableofcontents
\newpage

\section{Introduction}\label{sec1}

In this paper we consider the problem of unique continuation of solutions for systems of evolution equations
\bb\label{1.0.1}
u^i_t+A^i_ju^j=g^i,\quad 1\leq i\leq m,
\ee
where $m$ is a fixed positive integer; $\uu=(u^1,\cdots,u^m)$ is a vector function of real variables $t$ and $x$, such that for each fixed $t$ for which the solution exists, $\uu(t,\cdot)$ belongs to a certain Banach space; $A^i_j=A^i_j([\uu])$ are $m^2$ linear operators; $g=g([\uu])$ is a member of some Banach space for each fixed $t$, where $g=(g^1,\cdots,g^m)$; and $[\uu]$ denotes dependence on $\uu$ and its ($x-$)derivatives up to a finite, but arbitrary, order. The Einstein summation convention over repeated indices is presupposed in \eqref{1.0.1}. In case $m=1$, we simply denote $u$ instead of $\uu$. 

System \eqref{1.0.1} can be conveniently rewritten in a shorter form
\bb\label{1.0.2}
{\bf u}_t+A([{\bf u}]){\bf u}=g([{\bf u}]),
\ee
where $A([\uu])=(A^i_j([\uu]))$ is a (matrix of) linear operator(s). We would like to point out that under mild conditions on $A(\cdot)$ and $g(\cdot)$ one can establish local well-posedness of the equation above subject to $\uu(0,\cdot)=\uu_0(\cdot)$ using Kato's approach \cite{kato}.

Important equations describing waves propagating in a physical media fall into the class \eqref{1.0.2}, or can be taken as such. To restrict ourselves to a single example, we mention the non-local form of the Camassa-Holm equation \cite{chprl}
\bb\label{1.0.3}
u_t+uu_x=-\p_x\Lambda^{-2}\Big(u^2+\f{1}{2}u_x^2\Big),
\ee
which is a famous model in hydrodynamics.

In this work we present two different approaches for studying unique continuation of solutions of equations of the type \eqref{1.0.2}:
\begin{itemize}
    \item The first one does not necessarily presume uniqueness of solutions, but requires the existence of a conserved quantity satisfying certain mild conditions;
    \item The second one makes use of local well-posedness, but does not require any conserved quantity.
\end{itemize}

In some sense, our results could be seen as unique continuation counter-parts for some classes of abstract evolution equations considered by Kato \cite{kato}: while his theory can be applied to a large class of equations of the type \eqref{1.0.2}, our findings require more conditions on the map $g(\cdot)$, stronger than those requested by Kato for proving local well-posedness for Cauchy problems involving \eqref{1.0.2}. Despite being applicable to a class of problems smaller than that covered by Kato's local well posedness approach, the tools introduced in the present paper can still be applied to a considerably wide class of equations describing physical processes in hydrodynamics, wave propagation and integrable systems. 

Let us make an overview of the manuscript's structure: in the next section we present our main results, namely, our approaches for dealing with unique continuation properties for the solutions of \eqref{1.0.2}, which we call {\it conserved quantity approach} and {\it local well-posedness approach}. Then, in section \ref{sec3} we illustrate our results by applying them to some equations, mostly of them coming from hydrodynamics and integrable systems theory.

The machinery developed in the present work enables us to treat problems of unique continuation of solutions for other classes of equations beyond \eqref{1.0.2} with some customisation, and depending on the situation, regardless the order of the derivative with respect to $t$. As an example, the tools introduced here give us condition to prove unique continuation results for the generalised Boussinesq equation
\bb\label{1.0.4}
u_{tt}=\p_x^2f(u)+u_{ttxx},
\ee
which was considered by Constantin and Molinet \cite{moli}. This is done in section \ref{sec4}. Such an equation, for certain choices of the function $f(\cdot)$, can  describe nonlinear waves in weakly dissipative media \cite{clark}. 

Another example is given in section \ref{sec5}, where we use the ideas given in section \ref{sec2} to prove that if $(u,\rho)$ is a conservative solution (this notion will be given in section \ref{sec5}) of the modified Euler-Poisson system
\bb\label{1.0.5}
\left\{
\ba{l}
\rho_t+(u\rho)_x=0,\\
\\
u_t-u_{txx}+uu_x-3u_xu_{xx}-uu_{xxx}+\rho_x=0,
\ea
\right.
\ee
vanishing on an open set $\emptyset\neq\Omega\subseteq\R^2$, then $\rho\equiv0$ and $u$ is necessarily a solution of the inviscid Burgers equation 
$$u_t+uu_x=0.$$

Our results are discussed in section \ref{sec6}, while our conclusions are presented in section \ref{sec7}.

\section{Main results}\label{sec2}

\subsection{Notation and conventions}

The set of continuous operators between two Banach spaces $\B_1$ and $\B_2$ is denoted by ${\cal L}(\B_1;\B_2)$, while $\B_1\hookrightarrow\B_2$ means that the Banach space $\B_1$ is continuously embedded in the Banach space $\B_2$. 

By $X$ we denote the real line $\R$ or the circle $\s$, that can be identified with the interval $[0,1)$. For a function $u$ with two variables, partial derivatives with respect to the first argument will be denoted by $u_t$, whereas partial derivatives of $u$ with respect to its second argument will be often denoted by $u_x$ or, eventually, by $\p_xu$.

The convolution between two functions $f$ and $g$ is denoted by $f\ast g$; $H^s(X)$, $s\in\R$, is the usual Sobolev space of order $s$; $\Lambda^2:=1-\p_x^2$, and its inverse $\Lambda^{-2}f=g\ast f$ is given by
$$
g(x)=\left\{\ba{ll}
\ds{\f{e^{-|x|}}{2}},&\text{if}\,\,X=\R,\\
\\
\ds{\frac{\cosh(x-\lfloor x \rfloor - 1/2)}{2\sinh(1/2)}}, & \text{if}\,\,X=\mathbb{S}.
\ea
\right.
$$

Very often we make use of the identity $\p_x^2\Lambda^{-2}=\Lambda^{-2}-1$ whenever such operator appears and is defined. For $s\geq t$, then $H^s(X)\hookrightarrow H^{t}(X)$, and $H^s(X)$ is an algebra as soon as $s>1/2$, that is, if $u,v\in H^s(X)$, then $uv\in H^s(X)$. Additionally, given $u,v\in H^s(X)$, with $s>1/2$, then $\|uv\|_{H^{s-1}(X)}\leq c\|u\|_{H^s(X)}\|v\|_{H^{s-1}(X)}$, for some constant $c>0$ depending on $s$. For further details, see \cite[Chapter 4]{taylor} and \cite[Appendix]{kato}.

\subsection{General assumptions}

Throughout the paper we assume the following global conditions on the system \eqref{1.0.2} and its solutions $\uu$:

\begin{enumerate}
    \item[{\bf G0}] $\uu$ is defined on $[0,T)\times X$, for some $T>0$;
    \item[{\bf G1}] There exist two Banach spaces $\B_1$ and $\B_2$, such that $\B_1\hookrightarrow\B_2$, and $\uu\in C^0([0,T);\B_1)\cap C^1((0,T);\B_2)$;
    \item[{\bf G2}] $A([\uu])\in {\cal L}(\B_1;\B_2)$, for each fixed $t$;
    \item[{\bf G3}] $g([\uu])\in \B_2$, for each fixed $t$. Moreover, $\uu\mapsto g([\uu])$ is continuous.
    
\end{enumerate}

Henceforth, we simply write $\uu$ instead of $[\uu]$ in order to make the notation cleaner and more concise. Moreover, in some parts of the presentation we replace $[\uu]$ by $\uu(t,x)$ in order for the independent variables to be emphasised.

We would like to make a small digression and present some short, but useful, comments about the general conditions above.

\begin{remark}\label{rem2.1}
Condition {\bf G0} does not necessarily request, and nor imply, uniqueness of the solution, although {\bf G1} may lead to such under mild conditions, see \cite[Theorem 6]{kato}.
\end{remark}

\begin{remark}\label{rem2.2}
A solution $u$ satisfying condition {\bf G1} need not to be continuous, although it may imply that $u$ is continuously differentiable provided that members of the space ${\cal B}_2$ have enough regularity, e.g, ${\cal B}_2=H^s$, for $s$ large enough.
\end{remark}

\begin{remark}\label{rem2.3}
For each $t$ such that the solution exists, conditions {\bf G2} and {\bf G3} say that $A(\uu(t,\cdot))$ is a bounded linear operator between the Banach spaces $\B_1$ and $\B_2$, whereas $g(\uu(t,\cdot))\in{\cal B}_2$. Moreover, for $t>0$, we have $g(\uu(t,\cdot))=\uu_t(t,\cdot)+A(\uu(t,\cdot))\uu(t,\cdot)$. This relation plays vital importance in the demonstration of our main results.
\end{remark}

\begin{remark}\label{rem2.4}
We observe that $g$ must vanish at $0$ for sake of consistency.
\end{remark}

Below we present some examples illustrating the conditions above. 

\begin{example}\label{ex2.1}
Let $u\in C^0([0,T);H^{s}(X))\cap C^1((0,T);H^{s-1}(X))$, $A(u)=u\p_x$, ${\cal B}_1=H^s(X)$, $\B_2=H^{s-1}(X)$, and $s>3/2$. Then conditions {\bf G0}--{\bf G2} are clearly satisfied and, in particular, $A(u)u=uu_x$.

In addition, the fact that the operator $\p_x\Lambda^{-2}$ applies $H^s(X)$ into $H^{s+1}(X)$ implies 
$$g(u)=\p_x\Lambda^{-2}\Big(au+bu^2+c u_x^2\Big)\in H^{s-1}(X),$$
for all $a,b,c\in\R$, since $H^{s}(X)\hookrightarrow H^{s-1}(X)$, and hence, ${\bf G3}$ is satisfied.
\end{example}

\begin{example}\label{ex2}
Let $A(u)=u_x\p_x$ and $g(u)=\Lambda^{-2}(u_x^2+u_{xx}^2/2)$. Taking  $u\in C^0([0,T);H^s(\R))\cap C^1((0,T);H^{s-1}(\R))$, with $s>5/2$, we see that {\bf G0}--{\bf G3} are satisfied with $\B_1=H^s(X)$, $\B_2=H^{s-1}(X)$, $s>5/2$.
\end{example}

Let $\uu$ be a solution of \eqref{1.0.2}. Fixed $t\in[0,T)$, we define  
\bb\label{2.0.1}
F_t(x):=g(\uu(t,x)),
\ee
and for this reason, in some parts of the paper we write $F_t=(F^1_t,\cdots,F_t^m)$. Observe that \eqref{2.0.1} defines a family of (vector) functions 
$(F_t)_{t\in[0,T)}\subseteq{\cal B}_2$.
\subsection{Conserved quantity approach}\label{sec2.3}

In addition to the global conditions ${\bf G0}-{\bf G3}$, throughout this subsection we also assume the following:
    \begin{enumerate}
    \item[{\bf C0}] There exists a continuous, non-negative or non-positive, real valued function $h$, depending on $u$ and its derivatives of arbitrary, but finite order, which we simply write as $h=h(\uu(t,x))$, such that
    \bb\label{2.1.1}
    {\cal H}(t)=\int_X h(\uu(t,x))dx
    \ee
    is constant. Additionally, we assume that $h(y)=0$ if and only if $y=0$.
    \end{enumerate}
    
The quantity ${\cal H}(t)$ and the function $h(\uu(t,x))$ in \eqref{2.1.1} are called {\it conserved quantity} and {\it conserved density}, respectively, whereas in the jargon of analysis of PDEs, very often ${\cal H}(t)$ is called {\it conservation law}. Condition ${\bf C0}$ is nothing but the assumption of the existence of a conserved quantity for solutions of \eqref{1.0.2}. 

\begin{remark} Even though we could include explicit dependence on the independent variables without changing the results, we prefer the restriction of the conserved densities to those only depending on $\uu$ and its derivatives of some finite order for sake of simplicity. In case these other variables are taken into account, we should assume that $h(t,x,[\uu])$ vanishes if and only if $\uu=0$. Moreover, note that $[\uu]\equiv0$ if and only if $\uu\equiv0$. Finally, it is worth mentioning that we identify a function $\uu$ with its zeroth-order derivative.

\end{remark}

Some examples illustrating {\bf C0} are:
\begin{itemize}
    \item Let $u=u(t,x)$ be a real valued function and $h(u)=u$. If $u$ is non-negative, then \eqref{2.1.1} implies that $\|u\|_{L^1(\R)}$ is a conserved quantity.
    \item Still assuming that $u=u(t,x)$ and $\rho=\rho(t,x)$ are real valued functions, then $h(u)=u^2+u_x^2+\rho^2$ is a density implying that the product norm $\|u\|_{H^1(\R)}+\|\rho\|_{L^2(\R)}$ is conserved.
\end{itemize}

Above we assume that all integrals involved are finite. More examples can be found and are discussed in detail in section \ref{sec3}. Henceforth, $a$ and $b$ denote distinct real numbers satisfying $a<b$.

\begin{theorem}\label{teo2.1}
Let $t_0\in(0,T)$, $a,b\in X$, and $\uu$ be a solution of \eqref{1.0.2}. Suppose that $\uu$ and $g(\cdot)$ satisfy ${\bf C0}$ and the conditions
\begin{enumerate}
    \item[{\bf C1}] $\uu(t_0,x)=0$, $\uu_t(t_0,x)=0$, for $x\in[a,b]$;
    \item[{\bf C2}] $g(\uu(t_0,x))=0$, $x\in[a,b]$, implies $\uu(t_0,\cdot)\equiv0$.
\end{enumerate}
Then $\uu\equiv0$.
\end{theorem}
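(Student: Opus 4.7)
The strategy proceeds in two stages: (i) upgrade the local vanishing of $\uu$ and $\uu_t$ on $[a,b]\times\{t_0\}$ to the full vanishing of $\uu(t_0,\cdot)$ on $X$; (ii) propagate this single-slice information to all of $[0,T)$ via the conserved quantity from {\bf C0}.

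For stage (i), I would read the equation pointwise in $x$ through the identity recorded in Remark~\ref{rem2.3},
\[
g(\uu(t_0,x)) = \uu_t(t_0,x) + \big(A(\uu(t_0,\cdot))\uu(t_0,\cdot)\big)(x), \qquad x \in X,
\]
which is legitimate since $t_0>0$. On $[a,b]$ the first summand on the right vanishes by {\bf C1}. For the second, note that $\uu(t_0,\cdot)\equiv 0$ on $[a,b]$ forces all of its $x$-derivatives to vanish on $(a,b)$; in the concrete settings targeted by the paper $A$ is a local differential operator whose coefficients are polynomial in $\uu$ and its $x$-derivatives (cf.\ Examples~\ref{ex2.1}--\ref{ex2}), so $A(\uu(t_0,\cdot))\uu(t_0,\cdot)$ vanishes on $[a,b]$ as well. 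Hence $g(\uu(t_0,x)) = 0$ for $x \in [a,b]$, and hypothesis {\bf C2} immediately gives $\uu(t_0,\cdot) \equiv 0$ on all of $X$.

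For stage (ii), the characterisation $h(y)=0\Leftrightarrow y=0$ produces $h(\uu(t_0,x))\equiv 0$ and hence ${\cal H}(t_0)=0$; condition {\bf C0} then forces ${\cal H}(t) \equiv 0$ on $[0,T)$. Assuming without loss of generality $h\geq 0$, the identity $\int_X h(\uu(t,x))\,dx = 0$ together with the non-negativity and continuity of $h$ yields $h(\uu(t,x))=0$ for almost every $x$, and consequently $\uu(t,x)=0$ almost everywhere in $x$, for each fixed $t$; the regularity $\uu \in C^0([0,T);\B_1)$, with $\B_1$ continuously embedded into $C^0$ in every application, promotes this to a pointwise identity, giving $\uu\equiv 0$.

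The delicate step is the vanishing of $A(\uu(t_0,\cdot))\uu(t_0,\cdot)$ on $[a,b]$ in stage (i): this is not a consequence of the abstract boundedness condition {\bf G2} alone, and implicitly relies on a locality property of $A$ relative to the vanishing of $\uu$ on an interval, which must be verified case by case in concrete examples. The almost everywhere to pointwise upgrade in stage (ii) is routine whenever $\B_1$ embeds into continuous functions, as happens in every application of the theorem.
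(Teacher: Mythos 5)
Your proof is correct and follows essentially the same route as the paper's: read the equation at $t=t_0$ on $[a,b]$ to conclude $g(\uu(t_0,\cdot))=0$ there, invoke {\bf C2} to get $\uu(t_0,\cdot)\equiv0$ on all of $X$, and then use the vanishing of the conserved quantity ${\cal H}$ together with the fixed sign and nondegeneracy of $h$ to propagate this to every $t\in[0,T)$. The only difference is that you make explicit two points the paper leaves implicit --- the locality of $A$ needed to kill $A(\uu(t_0,\cdot))\uu(t_0,\cdot)$ on $[a,b]$, and the almost-everywhere-to-pointwise upgrade at the end --- both of which are legitimate observations but do not change the argument.
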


\begin{proof}
From \eqref{1.0.2} (see also Remark \ref{rem2.3}), we have
\bb\label{2.1.2}
g(\uu(t,x))=\uu_t(t,x)+A(\uu(t,x))\uu(t,x).
\ee
As a consequence, restricting $x$ to $[a,b]$, and taking $t=t_0$, from ${\bf C1}$ we conclude that $g(\uu(t_0,x))=0$, whereas ${\bf C2}$ implies that $\uu(t_0,x)=0$, $x\in X$. The conserved quantity \eqref{2.1.1} then implies ${\cal H}(t)={\cal H}(t_0)=0$, for any $t\in(0,T)$. Due to the fact that $h(\cdot)$ is either non-negative or non-positive, the condition
$$
\int_Xh(\uu(t,x))dx=0,
$$
implies that $h(\uu(t,\cdot))=0$, that is, $\uu(t,\cdot)=0,\,\,t\in[0,T)$, since $h(y)=0$ if and only if $y=0$.
\end{proof}

\begin{corollary}\label{cor2.1}
If $\uu$ is a solution for \eqref{1.0.2} such that ${\bf C0}$ and ${\bf C2}$ hold, and there exists an open set $\Omega\subseteq[0,T)\times X$ for which $\uu\Big|_{\Omega}\equiv0$, then $\uu\equiv0$.
\end{corollary}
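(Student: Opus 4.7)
The plan is to reduce the corollary to a direct application of Theorem \ref{teo2.1} by extracting from $\Omega$ a time $t_0\in(0,T)$ and a closed spatial interval $[a,b]\subset X$ on which condition ${\bf C1}$ holds.

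First, since $\Omega$ is a nonempty open subset of $[0,T)\times X$, it must contain a product neighborhood of the form $(t_1,t_2)\times(a_0,b_0)$ with $0<t_1<t_2<T$ and $a_0<b_0$ in $X$. If the only points of $\Omega$ happen to lie on the initial slice $\{0\}\times X$, one simply shrinks a basic neighborhood $[0,\eps)\times(a_0,b_0)\subseteq\Omega$ to the interior rectangle $(0,\eps/2)\times(a_0,b_0)$. I would then fix any $t_0\in(t_1,t_2)$ and any closed subinterval $[a,b]\subset(a_0,b_0)$.

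Next, the hypothesis $\uu|_{\Omega}\equiv 0$ immediately gives $\uu(t_0,x)=0$ for all $x\in[a,b]$. To obtain $\uu_t(t_0,x)=0$ on the same interval, I would observe that for each fixed $x\in[a,b]$ the scalar map $t\mapsto\uu(t,x)$ is identically zero on $(t_1,t_2)$, so its classical derivative at $t_0$ vanishes; under the regularity imposed by ${\bf G1}$, this value coincides with the pointwise value of $\uu_t(t_0,\cdot)\in\B_2$. With ${\bf C1}$ thus verified at $t_0$ on $[a,b]$, and ${\bf C0}$ and ${\bf C2}$ supplied by hypothesis, Theorem \ref{teo2.1} yields $\uu\equiv 0$.

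The only delicate point I anticipate is the identification between the Banach-space-valued derivative $\uu_t\in\B_2$ guaranteed by ${\bf G1}$ and the classical pointwise derivative in $t$ invoked above; this follows provided members of $\B_2$ admit continuous pointwise evaluation, which is satisfied in all the concrete function spaces (Sobolev spaces $H^s$ with $s$ large enough) considered later in the paper. Once that identification is made, the corollary is an immediate corollary of Theorem \ref{teo2.1} in the literal sense.
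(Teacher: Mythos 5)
Your proposal is correct and follows essentially the same route as the paper: the paper's own proof simply picks $t_0$ and $[a,b]$ with $\{t_0\}\times[a,b]\subseteq\Omega$ and invokes Theorem \ref{teo2.1}, leaving implicit the step you spell out (that $\uu_t(t_0,\cdot)$ vanishes on $[a,b]$ because $t\mapsto\uu(t,x)$ is identically zero near $t_0$). Your additional care about the interior rectangle near $t=0$ and the identification of the $\B_2$-valued derivative with the pointwise one only makes explicit what the paper takes for granted.
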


\begin{proof}
Let us take $t_0\in(0,T)$ and $a,b\in X$ such that $\{t_0\}\times[a,b]\subseteq\Omega$. Then condition ${\bf C1}$ is satisfied and the result is a foregone conclusion of Theorem \ref{teo2.1}.
\end{proof}

\begin{theorem}\label{teo2.2}
Let $\uu=(u^1,\cdots,u^m)$, $F_t=(F_t^1,\cdots,F_t^m)$, $t\in[0,T)$, where $F_t$ is given by \eqref{2.0.1}, and suppose that ${\bf C0}$ holds. Assume that there exists $t_0\in(0,T)$, $a,b\in X$, and some $i\in\{1,\cdots,m\}$ such that:
\begin{enumerate}
    \item[{\bf C3}] $\uu(t_0,x)=0$, $x\in[a,b]$;
    \item[{\bf C4}] $\uu_t(t_0,a)=\uu_t(t_0,b)$;
    \item[{\bf C5}] $\p_x F_{t_0}^i(\cdot)$ is either non-positive or non-negative on $(a,b)$;
    \item[{\bf C6}] $\p_x F^i_{t_0}(x)=0$, for $x\in(a,b)$, implies $\uu(t_0,\cdot)=0$.
\end{enumerate}
Then $\uu\equiv0$.
\end{theorem}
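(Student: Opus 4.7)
The plan is to reduce the hypotheses to the pointwise conclusion $\uu(t_0,\cdot)\equiv 0$ and then invoke the conserved quantity argument from Theorem \ref{teo2.1} unchanged; the new content is entirely in how {\bf C3}--{\bf C6} deliver this vanishing.

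To carry out the reduction, I would start from the pointwise identity $F^i_{t_0}(x)=u^i_t(t_0,x)+\bigl(A(\uu(t_0,x))\uu(t_0,x)\bigr)^i$, which follows from \eqref{2.1.2} and the definition \eqref{2.0.1}. By {\bf C3}, $\uu(t_0,\cdot)$ together with all its $x$-derivatives vanishes on $(a,b)$; assuming $A(\uu)\uu$ depends on $\uu$ and its $x$-derivatives in a local manner, the bracketed term vanishes on $[a,b]$, so $F^i_{t_0}(x)=u^i_t(t_0,x)$ on that interval. Integrating and using the fundamental theorem of calculus,
\[
\int_a^b \p_x F^i_{t_0}(x)\,dx=F^i_{t_0}(b)-F^i_{t_0}(a)=u^i_t(t_0,b)-u^i_t(t_0,a)=0,
\]
where the last equality is {\bf C4}. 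Combined with the definite sign assumed in {\bf C5}, this forces $\p_x F^i_{t_0}\equiv 0$ on $(a,b)$. Condition {\bf C6} then yields $\uu(t_0,\cdot)\equiv 0$, at which point the conservation law argument from Theorem \ref{teo2.1} finishes the proof.

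The step I expect to be the main obstacle is the vanishing of $A(\uu(t_0,\cdot))\uu(t_0,\cdot)$ on $[a,b]$: it relies on $A$ being a local (possibly nonlinear) differential operator in $\uu$, as in the model cases $A(u)=u\p_x$ and $A(u)=u_x\p_x$ of Examples \ref{ex2.1} and \ref{ex2}, and would need a separate treatment for genuinely nonlocal $A$. A secondary technical point is the regularity needed to apply the fundamental theorem of calculus to $F^i_{t_0}$ on $[a,b]$, which is ensured by {\bf G1}--{\bf G3} provided the working spaces ${\cal B}_1,\B_2$ are chosen with enough smoothness, as in the Sobolev setting used throughout the paper.
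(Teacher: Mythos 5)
Your proposal is correct and follows essentially the same route as the paper: use the fundamental theorem of calculus together with {\bf C3}, {\bf C4} and the identity \eqref{2.1.2} to get $F^i_{t_0}(a)=F^i_{t_0}(b)$, invoke the sign condition {\bf C5} to force $\p_x F^i_{t_0}\equiv0$ on $(a,b)$, apply {\bf C6}, and close with the conserved-quantity argument of Theorem \ref{teo2.1}. Your remark on the locality of $A(\uu)\uu$ is a fair observation about an implicit assumption the paper also relies on, and your derivation of $F^i_{t_0}(b)-F^i_{t_0}(a)=0$ directly from {\bf C4} is in fact slightly cleaner than the paper's claim that $g$ vanishes at the endpoints.
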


\begin{proof}
By the Fundamental Theorem of Calculus, we have
\bb\label{2.1.3}
F_{t_0}^i(b)-F_{t_0}^i(a)=\int_{a}^b\p_x F^i_{t_0}(s)ds.
\ee

Conditions ${\bf C3}$ and ${\bf C4}$, jointly with \eqref{2.1.2}, imply $g(\uu(t_0,a))=g(\uu(t_0,b))=0$. By \eqref{2.0.1} and \eqref{2.1.2} we conclude that $F_{t_0}(a)=F_{t_0}(b)$, which, jointly with \eqref{2.1.3} tell us that
$$
\int_{a}^b\p_x F_{t_0}^i(s)ds=0.
$$

By {\bf C5,} $\p_x F^i_{t_0}(\cdot)$ is non-positive or non-negative. Hence, the equality above implies that $\p_x F_{t_0}^i(x)=0$, $x\in(a,b)$, while {\bf C6} tells that $\uu(t_0,x)=0$, $x\in X$. As consequence, from \eqref{2.1.1} we arrive at ${\cal H}(t_0)=0$, which implies $\uu(t,\cdot)=0$ in view of the invariance of the conserved quantity.
\end{proof}

\subsection{Local well-posedness approach}\label{sec2.4}

The unique continuation results proved before were based on the existence of a conserved quantity as stated by condition {\bf C0}. We now look to the problem using the following different conditions.
\begin{itemize}
    \item[{\bf C7}] The Cauchy problem
    \bb\label{2.2.1}
    \left\{
    \ba{lcl}
    \uu_t+A(\uu)\uu&=&g(\uu(t,x)),\\
    \\
    \uu(0,x)&=&\uu_0(x),
    \ea
    \right.
    \ee
for some $u_0\in {\cal B}_1$, is locally well-posed, so that we can at least assure the existence of a unique local solution $u$ defined on $[0,T)\times X$, for some lifespan $T>0$.

\end{itemize}

It is worth mentioning that system \eqref{1.0.2} is invariant under translations in $t$, although its use may not be evident at first glance. Then we have the following crucial observation.

\begin{remark}\label{rem2.6}
Condition {\bf C7} has a powerful implication for the approach we are bound to present: if we know that a solution $\uu$ of \eqref{2.2.1} satisfies $\uu(t_0,x)=0$, for all $x\in X$ and some $t_0\in (0,T)$, then $v=\uu(t+t_0,x)$ is a solution of 
$$
    \left\{
    \ba{lcl}
    v_t+A(v)v&=&g(v(t,x)),\\
    \\
    v(0,x)&=&0.
    \ea
    \right.
$$

Clearly the function $v\equiv0$ is a solution of such a Cauchy problem. Since it has a unique solution, we are forced to conclude that $v(t,x)=0$ (and it can be taken as a global solution) and, again by uniqueness, the same applies to $\uu$.
\end{remark}

\begin{theorem}\label{teo2.3}
Let $\uu$ be a solution of \eqref{1.0.2} satisfying {\bf C7}. If, for some $t_0\in(0,T)$, $a,b\in X$, {\bf C1} and {\bf C2} as in Theorem \ref{teo2.1} are satisfied, then $\uu\equiv0$.
\end{theorem}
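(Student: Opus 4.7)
The proof would run parallel to that of Theorem \ref{teo2.1} in its first half, substituting {\bf C7} for the conserved quantity in the endgame. First, I would evaluate the identity \eqref{2.1.2} at $t=t_0$. Since {\bf C1} gives $\uu(t_0,\cdot)\equiv 0$ on $[a,b]$, every $x$-derivative of $\uu(t_0,\cdot)$ vanishes throughout the open interval $(a,b)$, so the pointwise value of $A(\uu(t_0,\cdot))\uu(t_0,\cdot)$ is zero on $(a,b)$. Combined with $\uu_t(t_0,\cdot)\equiv 0$ on $[a,b]$, this yields $g(\uu(t_0,x))=0$ for $x\in(a,b)$, and by continuity for $x\in[a,b]$. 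Condition {\bf C2} then upgrades this local vanishing to $\uu(t_0,\cdot)\equiv 0$ on the whole of $X$.

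With $\uu(t_0,\cdot)\equiv 0$ secured, I would invoke the translation-invariance observation of Remark \ref{rem2.6}. The shifted function $v(t,x):=\uu(t+t_0,x)$, defined for $t\in[0,T-t_0)$, solves \eqref{2.2.1} with trivial initial datum $v(0,\cdot)=0$. Because $w\equiv 0$ is another solution of this same Cauchy problem and {\bf C7} supplies uniqueness, we must have $v\equiv 0$; equivalently, $\uu(t,\cdot)\equiv 0$ for every $t\in[t_0,T)$.

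The subtlest step, which I expect to be the main obstacle, is propagating the vanishing backwards to $t\in[0,t_0)$: strict forward well-posedness as literally phrased in {\bf C7} would only close the interval $[t_0,T)$. To reach the stated conclusion $\uu\equiv 0$ one must read {\bf C7} as providing a two-sided uniqueness property, namely that any solution in the class $C^0([0,T);\B_1)\cap C^1((0,T);\B_2)$ which coincides with the zero solution at an interior time $t_0\in(0,T)$ must be identically zero on all of $[0,T)\times X$. This is precisely the reading indicated by Remark \ref{rem2.6}, and once it is granted, the step from $\uu(t_0,\cdot)\equiv 0$ to $\uu\equiv 0$ is immediate; otherwise the conclusion should be understood as holding from $t_0$ onwards.
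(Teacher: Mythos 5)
Your proof is correct and follows essentially the same route as the paper: deduce $g(\uu(t_0,\cdot))=0$ on $[a,b]$ from \textbf{C1} via \eqref{2.1.2}, upgrade to $\uu(t_0,\cdot)\equiv 0$ by \textbf{C2}, and conclude by the translation-plus-uniqueness argument of Remark \ref{rem2.6}. Your closing caveat about propagating the vanishing backwards to $[0,t_0)$ is a fair observation — the paper's Remark \ref{rem2.6} silently reads \textbf{C7} as giving two-sided uniqueness — but this is a gap in the paper's own exposition, not in your argument.
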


\begin{proof}
If {\bf C1} is satisfied, then $g(\uu(t_0,x))=0$, $x\in[a,b]$. By {\bf C2}, $\uu(t_0,x)=0$, $x\in X$. The result is then a consequence of Remark \ref{rem2.5}.
\end{proof}

\begin{theorem}\label{teo2.4}
Let $\uu$ be a solution of \eqref{1.0.2} satisfying {\bf C7}. If, for some $t_0\in(0,T)$, $a,b\in X$, {\bf C3}--{\bf C6} as in Theorem \ref{teo2.2} are satisfied, then $\uu\equiv0$.
\end{theorem}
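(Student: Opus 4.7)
The plan is to combine the strategy of Theorem \ref{teo2.2} with the uniqueness argument encoded in Remark \ref{rem2.6}. In a nutshell: I would first use {\bf C3}--{\bf C6} to deduce that $\uu(t_0, \cdot) \equiv 0$ on the whole of $X$, and then invoke the local well-posedness from {\bf C7} to propagate the vanishing to all of $[0,T) \times X$. The role of {\bf C0} in Theorem \ref{teo2.2} — which transports information at time $t_0$ to every time — is played here by uniqueness of the Cauchy problem \eqref{2.2.1}.

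For the first step, I would replay the argument of Theorem \ref{teo2.2} essentially verbatim. Starting from the identity $g(\uu(t,x)) = \uu_t(t,x) + A(\uu(t,x))\uu(t,x)$ highlighted in Remark \ref{rem2.3}, I would evaluate it at $t = t_0$ and at $x \in \{a,b\}$. Using {\bf C3} (which makes $\uu(t_0,\cdot)$ and its spatial derivatives vanish on the interior of $[a,b]$) and {\bf C4} (equality of $\uu_t$ at the two endpoints), I would conclude that $F_{t_0}(a) = F_{t_0}(b)$, and in particular for the $i$-th component, $F_{t_0}^i(a) = F_{t_0}^i(b)$. The Fundamental Theorem of Calculus then gives $\int_a^b \partial_x F_{t_0}^i(s)\, ds = 0$; the sign-definiteness provided by {\bf C5} forces $\partial_x F_{t_0}^i \equiv 0$ on $(a,b)$; and {\bf C6} finally yields $\uu(t_0, \cdot) \equiv 0$ on all of $X$.

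For the second step, I would apply Remark \ref{rem2.6} directly. Translating time by $t_0$, the function $v(t,x) := \uu(t+t_0, x)$ satisfies the Cauchy problem \eqref{2.2.1} with zero initial data. Since $v \equiv 0$ is also a solution of that problem, the uniqueness granted by {\bf C7} forces $v \equiv 0$, hence $\uu \equiv 0$ on $[t_0,T) \times X$; uniqueness applied to the original Cauchy problem (or, equivalently, backward in time from $t_0$ within the lifespan) then extends the vanishing to the remaining interval $[0,t_0] \times X$.

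The main obstacle, exactly as in Theorem \ref{teo2.2}, is the justification of the equality $F_{t_0}(a) = F_{t_0}(b)$: while {\bf C3} gives vanishing of $\uu(t_0,\cdot)$ on the full interval $[a,b]$, one still needs $A(\uu(t_0,\cdot))\uu(t_0,\cdot)$ to vanish at the boundary points $a$ and $b$. This should follow from the fact that $A$ is built from $\uu$ and its $x$-derivatives together with the continuity of $A(\uu(t_0,\cdot))\uu(t_0,\cdot)$ as an element of $\B_2$, but in practice it is a structural remark about the operator $A$ rather than a genuinely new computation. Beyond this bookkeeping, no ingredient outside Theorem \ref{teo2.2}'s argument and Remark \ref{rem2.6} is required.
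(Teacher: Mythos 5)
Your proposal is correct and follows the paper's own route exactly: the paper proves Theorem \ref{teo2.4} by noting that the argument of Theorem \ref{teo2.2} (using {\bf C3}--{\bf C6} but not {\bf C0}) already yields $\uu(t_0,\cdot)\equiv 0$, and then applying the time-translation plus uniqueness argument of Remark \ref{rem2.6}. Your additional remarks on justifying $F_{t_0}(a)=F_{t_0}(b)$ and on propagating the vanishing backward to $[0,t_0]$ only make explicit points the paper leaves implicit.
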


\begin{proof}
The proof of theorem \ref{teo2.2} shows that $u(t_0,x)=0$. The result is again a consequence of Remark \ref{rem2.6}.
\end{proof}

\section{Applications}\label{sec3}

Here we illustrate our results by applying them to some equations. As a first application we specialise them to a general class of equations including relevant models in hydrodynamics and integrable systems, such as the Benjamin-Bona-Mahony (BBM) or the Camassa-Holm (CH) equations. 

\begin{theorem}\label{teo3.1}
Let $g=g(u)$ and $G=G(u)$ be continuous functions and suppose that $u$ is a solution of the equation
\bb\label{3.0.1}
u_t+g(u)=\p_x\Lambda^{-2}G(u).
\ee
Assume that $g=g(u)$ and $G=G(u)$ are functions depending on $u$ and its $x$ derivatives up to a finite, but arbitrary, order, such that $\p_x\Lambda^{-2}G$ is well defined; both vanishing when $u=0$; $G(\cdot)$ is either non-positive or non-negative; and the condition $G(u(t_0,x))=0$ implies that $u(t_0,x)=0$, $x\in X$. 

If there exists an open set $\Omega$ such that $u\big|_\Omega\equiv0$ and at least one of the conditions
\begin{itemize}
    \item ${\bf C0}$, or
    \item ${\bf C7}$ (replacing the equation in \eqref{2.2.1} by \eqref{3.0.1}),
\end{itemize}
are satisfied, then $u\equiv0$. In particular, the result holds if $g(u)=a(u)u$, where $a=a(u)$ is a linear operator.
\end{theorem}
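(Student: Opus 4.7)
The plan is to realize \eqref{3.0.1} as a special case of the abstract form \eqref{1.0.2} in which the ``source'' of the general framework plays the role of $\p_x\Lambda^{-2}G(u)$, and then to invoke either Corollary \ref{cor2.1} (when {\bf C0} holds) or Theorem \ref{teo2.3} (when {\bf C7} holds). Both require verifying conditions {\bf C1} and {\bf C2} of Theorem \ref{teo2.1}, with virtually all the work concentrated in {\bf C2}. Since $u|_{\Omega}\equiv 0$ on the open set $\Omega$, we may select $t_0\in(0,T)$ and $a<b$ in $X$ with $\{t_0\}\times[a,b]\subseteq\Omega$; this instantly gives $u(t_0,x)=0$ and $u_t(t_0,x)=0$ on $[a,b]$, which is exactly {\bf C1}.

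To verify {\bf C2}, I would substitute $u(t_0,\cdot)=0$ and $u_t(t_0,\cdot)=0$ on $[a,b]$ into \eqref{3.0.1} and use $g(0)=0$ to obtain $\p_x\Lambda^{-2}G(u(t_0,x))=0$ for $x\in[a,b]$. Differentiating once more on $(a,b)$ and invoking the identity $\p_x^2\Lambda^{-2}=\Lambda^{-2}-1$ yields
$$\Lambda^{-2}G(u(t_0,x))=G(u(t_0,x)),\quad x\in(a,b).$$
Because $u(t_0,\cdot)\equiv 0$ on $[a,b]$ forces $G(u(t_0,\cdot))=G(0)=0$ there, the right-hand side vanishes, so $\Lambda^{-2}G(u(t_0,\cdot))\equiv 0$ on $(a,b)$.

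Writing $\Lambda^{-2}f=k\ast f$ with the explicit, strictly positive Green's kernel $k$ recalled in Section \ref{sec2}, the previous vanishing translates to
$$0=\int_X k(x-y)\,G(u(t_0,y))\,dy,\quad x\in(a,b).$$
Because $k>0$ everywhere on $X$ and $G(u(t_0,\cdot))$ is of constant sign on $X$ by hypothesis, the integrand is sign-definite, which forces $G(u(t_0,y))=0$ for every (a.e.) $y\in X$. The standing hypothesis that $G(u(t_0,y))=0$ implies $u(t_0,y)=0$ then yields $u(t_0,\cdot)\equiv 0$, establishing {\bf C2}. Corollary \ref{cor2.1} (under {\bf C0}) or Theorem \ref{teo2.3} (under {\bf C7}) now delivers $u\equiv 0$. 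The ``in particular'' clause for $g(u)=a(u)u$ simply identifies $A(u)=a(u)$ in the canonical framework \eqref{1.0.2}, so that conditions {\bf G2}--{\bf G3} fit naturally.

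The only genuine obstacle I anticipate is the convolution step: one must exploit both the strict positivity of the Helmholtz kernel $k$ (on line or circle) and the fixed sign of $G$ to jump from vanishing of $\Lambda^{-2}G(u(t_0,\cdot))$ on the subinterval $(a,b)$ to pointwise vanishing of $G(u(t_0,\cdot))$ over all of $X$. Once that nonlocal-to-pointwise passage is secured, the rest of the argument is a direct unwinding of the abstract machinery developed in Section \ref{sec2}.
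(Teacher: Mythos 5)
Your proposal is correct and follows essentially the same route as the paper: localize to a slice $\{t_0\}\times[a,b]\subseteq\Omega$, use the strict positivity of the Helmholtz kernel together with the sign-definiteness of $G$ to upgrade the local vanishing of $\Lambda^{-2}G(u(t_0,\cdot))$ to $G(u(t_0,\cdot))\equiv0$ on all of $X$, hence $u(t_0,\cdot)\equiv0$, and then propagate in $t$ via the conserved quantity or via uniqueness. The only (cosmetic) difference is that the paper reaches the vanishing of $\Lambda^{-2}G$ on $(a,b)$ by the Fundamental Theorem of Calculus applied to $F_{t_0}=\p_x\Lambda^{-2}G$ between $a$ and $b$, whereas you differentiate the identity $\p_x\Lambda^{-2}G(u(t_0,\cdot))\equiv0$ once more and use $\p_x^2\Lambda^{-2}=\Lambda^{-2}-1$; both hinge on exactly the same nonlocal-to-pointwise positivity argument.
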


\begin{remark}\label{rem3.1}
Note that \eqref{3.0.1} belongs to the class \eqref{1.0.2} with $m=1$. Actually, taking $A(u)=0$ and replacing $g(u)$ by $\p_x\Lambda^{-2}G(u)-g(u)$ we obtain \eqref{3.0.1} from \eqref{1.0.2}.
\end{remark}

\begin{proof}
Let $t_0$, $a$ and $b$ such that $\{t_0\}\times[a,b]\subseteq\Omega$, and define $F_t(x)=\p_x\Lambda^{-2}G(t,x)$. By \eqref{3.0.1} we conclude that $F_{t_0}(a)=F_{t_0}(b)$. By the Fundamental Theorem of Calculus, we have
$$
0=F_{t_0}(b)-F_{t_0}(a)=\int_X\Lambda^{-2}Gdx=\int_X (g\ast G)dx,
$$
which implies that $G(u(t_0,x))=0$, and thus $u(t_0,x)=0$. The result then follows the same steps as theorem \ref{teo2.1} or Remark \ref{rem2.6}, depending on the condition taken.
\end{proof}

In what follows we apply our tools to establish unique continuation results of solutions for some concrete equations. The first example is the $b-$equation, and unique continuation for its solutions was previously established in \cite{linares}, see also \cite{pri-jde,nilay}. We present it here to show the difference between the approaches introduced in the previous sections. The remaining examples of this section are original.

\subsection{The $b-$equation}\label{sec3.1}

Let us apply theorem \ref{teo3.1} to the $b-$equation
$
u_t-u_{txx}+(b+1)uu_x=bu_xu_{xx}+uu_{xxx},
$
$b\in(0,3]$, which for $b=2$ and $b=3$ reduces to the Camassa-Holm and Degasperis-Procesi equations, respectively, see \cite{chprl}. It can be rewritten as
\bb\label{3.1.1}
u_t+uu_x=-\p_x\Lambda^{-2}\Big(\f{b}{2}u^2+\f{3-b}{2}u_x^2\Big).
\ee

Example \ref{ex2.1} implies that conditions {\bf G0}--{\bf G3} are satisfied whenever $u\in C^0([0,T);H^s(X))\cap C^1((0,T);H^{s-1}(X))$, with $s>3/2$. If we suppose that $b\in(0,3]$ and there exists an open set $\Omega$ for which $u\big|_\Omega\equiv0$, we can then find  $\{t_0\}\times[a,b]\subseteq[0,T)\times X$ such that conditions ${\bf C3}$ and ${\bf C4}$ in Theorem \ref{teo2.2} hold. 

Defining
\bb\label{3.1.2}
F_{t_0}(x)=-\p_x\Lambda^{-2}\Big(\f{b}{2}u^2+\f{3-b}{2}u_x^2\Big)(t_0,x),
\ee
we easily conclude that $F'_t(x)$ is non-positive. Moreover, as soon as $x\in[a,b]$, from \eqref{3.1.2}, \eqref{3.1.1}, and the Fundamental Theorem of Calculus, we have
$$
0=F_{t_0}(b)-F_{t_0}(a)=-\int_{a}^b\Big(\f{b}{2}u^2+\f{3-b}{2}u_x^2\Big)(t_0,x)dx,
$$
that is
\bb\label{3.1.3}
u(t_0,x)=0,\,x\in X.
\ee

\begin{itemize}
    \item {\bf Use of conserved quantities.} It is well known that \eqref{3.1.1} has the quantity
\bb\label{3.1.4}
{\cal H}(t)=\int_X u(t,x)dx
\ee
conserved, e.g., see \cite[Theorem 2.1]{anco}. Assuming that $u\geq0$ or $u\leq0$, then \eqref{3.1.4} implies the conservation of $\|u(t,\cdot)\|_{L^1(X)}$, and \eqref{3.1.3} gives $0=\|u(t_0,\cdot)\|_{L^1(X)}=\|u(t,\cdot)\|_{L^1(X)}$, which implies that $u\equiv0$.

Restricting particularly to the Camassa-Holm equation, it has the conserved quantity
$${\cal H}_1(t)=\int_{X}(u^2+u_x^2)dx=\|u\|^2_{H^1(X)}$$
conserved, see also \cite[Theorem 2.1]{anco} which implies that if ${\cal H}_1(t)$ vanishes at a point $t=t_0$, then is vanishes at all, as well as $u$. 

\item {\bf Use of local well-posedness.} It is well-known that the $b-$equation is locally well-posed, e.g, see \cite{himjmp2014,zhou}. Since $u_0(x):=u(t_0,x)=0$, for all $x\in X$, we conclude that if $u$ is a solution of \eqref{3.1.1} subject to $u(0,x)=u_0$, then $u\equiv0$.
\end{itemize}

Note that if $X=\R$ we can extend the result above for $b=0$. For this choice of $b$, the same procedure above leads us to conclude that $u_x(t_0,x)=0$. Since $u\rightarrow0$ as $|x|\rightarrow\infty$, we get $u(t_0,x)=0$ and the same arguments above prove that $u\equiv0$.

\begin{remark}\label{rem3.2}It is worth mentioning that the $b-$equation has peakon and multipeakon solutions (see \cite{deg,anco}) and they do not belong to the space $ C^0([0,T);H^s(X))\cap C^1((0,T);H^{s-1}(X))$. However, we observe that the peakons all conserve the $L^1(\R)-$norm and, restricting to the Camassa-Holm equation, they also conserve their $H^1(\R)-$norm. Therefore, these solutions can also be analysed in view of the tools introduced in this work. 

In regard to the multipeakon case, not all of them are conservative, but for those having a conserved quantity, the ideas can also be applied to them.
\end{remark}

\begin{remark}\label{rem3.3}
    In \cite{him-cmp} is proved a unique continuation property for the solutions of the CH equation by assuming that the initial datum has a certain exponential decay for larger values of $x$, and that the same happens with the corresponding solution at a latter time. These ideas can also be applied to the $b-$equation (under restrictions to the parameter $b$), as pointed out in \cite{him-cmp} and shown in \cite{henri-na}. Note, however, that these ideas cannot be applied to periodic problems.
\end{remark}

\begin{remark}\label{rem3.4}
    The proof of unique continuation properties of the $b-$equation using local well-posedness of the solutions is nothing but the one proved in \cite{linares} (for both periodic and non-periodic cases), whereas the proof using conserved quantities can be found in \cite{pri-jde}.
\end{remark}

\subsection{The Fornberg-Whitham equation}\label{sec3.2}

Let us now consider the Fornberg-Whitham (FW) equation
\bb\label{3.2.1}
u_t+\f{3}{2}uu_x=\p_x\Lambda^{-2}u,
\ee
which can be obtained from \eqref{3.0.1} by choosing $g(u)=3uu_x/2$ and $G(u)=u$, and use theorem \ref{teo3.1} to prove a unique continuation result for some of its solutions. Again, by example \ref{ex2.1} we see that conditions {\bf G0}--{\bf G3} are satisfied provided that $u\in C^0([0,T);H^s(X))\cap C^1((0,T);H^{s-1}(X))$, with $s>3/2$. Henceforth we assume that $u$ is a non-negative or non-positive solution of the FW equation. 

Let us suppose that we could find $t_0$, $a$ and $b$ such that $\{t_0\}\times[a,b]\subseteq(0,T)\times X$ and $u(t_0,x)=0$, for all $x\in[a,b]$, as well as $u_t(t_0,a)=u_{t}(t_0,b)$. Defining 
$
F_{t}(x)=\p_x\Lambda^{-2}u,
$
we see that as long as $x\in[a,b]$, we have
$
F_t'(x)=\Lambda^{-2}u,
$
and then $F'_{t_0}(x)$ is either non-negative or non-positive whenever $u$ is respectively non-negative or non-positive. 

Under this condition, similarly to the $b-$equation, we have
$$
0=F_{t_0}(b)-F_{t_0}(a)=\int_a^b F'_{t_0}(s)ds,
$$
that is,
\bb\label{3.2.2}
u(t_0,x)=0,\,\,x\in(a,b).
\ee

\begin{itemize}
    \item {\bf Use of conserved quantities.} Equation \eqref{3.2.1} has the conserved quantity \eqref{3.1.4}. Likewise the $b-$equation, it implies the conservation of the $L^1(X)-$norm of a solution $u$ of \eqref{3.2.1} as long as $u$ is either non-negative or non-positive. Proceeding similarly as for the $b-$equation, we conclude that $u\equiv0$.

    \item {\bf Use of local well-posedness.} The FW equation is locally well-posed, see \cite{holmes1,holmes2}. The result follows the same steps as for the $b-$equation and for this reason is omitted.
\end{itemize}

\begin{remark}\label{rem3.5}
The study of unique continuation properties of the FW equation above assumes that the solution does not change its sign. Although the FW has this sort of solutions (such as its peakon solution), it is unclear whether such a information could be inferred for solutions emanating from a suitable initial datum. This is a quite deep question and a positive answer to it would lead to the proof of existence of global solutions. In fact, \cite[Proposition 1]{haz} says that solutions of the FW equation blows-up in finite time if and only if the slope of the solution cannot be bounded from below when time approaches to a certain value. On the other hand, assuming that a certain initial datum satisfies the condition $u_0-u_0''\geq0$ (that in particular, would imply $u_0\geq0$), following the same steps of \cite[Theorem 3.5]{const1998-1} we can demonstrate that the corresponding solution is bounded from below at any time it exists.  Despite the relevance of the question, an investigation on it is out of the scope of the present work.
\end{remark}

\subsection{The potential Camassa-Holm equation}\label{sec3.3}

Here we apply theorems \ref{teo2.1} and \ref{teo2.3} to establish unique continuation results for non-periodic solutions of the equation
\bb\label{3.3.1}
u_{t}-u_{txx}=\f{1}{2}(3u_x^2-2u_xu_{xxx}-u_{xx}^2),
\ee
that was discovered by Novikov \cite{nov} and we shall refer to it as potential CH equation. 

Equation \eqref{3.3.1} can be rewritten as
\bb\label{3.3.2}
u_{t}-\f{1}{2}u_x^2=\Lambda^{-2}\Big(u_x^2+\f{1}{2}u_{xx}^2\Big),
\ee
so that it can be obtained from \eqref{3.0.1} by taking $A(u)=u_x\p_x$ and $g(u)=\Lambda^{-2}(u_x^2+u_{xx}^2/2)$. Moreover, taking  $u\in C^0([0,T);H^s(\R))\cap C^1((0,T);H^{s-1}(\R))$, with $s>5/2$, we see that {\bf G0}--{\bf G3} are satisfied with ${\cal B}=H^s(X)$, $s>5/2$.

Suppose that we could find $t_0$, $a$ and $b$ such that $u_t(t_0,x)=u_{t}(t_0,x)=0$, $x\in[a,b]$. By \eqref{3.3.2} we would then have
$$
g(u(t_0,x))=\Lambda^{-2}\Big(u_x^2+\f{1}{2}u_{xx}^2\Big)(t_0,x)=(u_{t}-\f{1}{2}u_x^2)(t_0,x)=0.
$$

Since $\Lambda^{-2}(u_x^2+u_{xx}^2/2)(t_0,x)=0$, we conclude that $u_x(t_0,\cdot)=0$, and then $u(t_0,x)=const$, $x\in \R$. Since $u$ vanishes at infinity, we conclude that
\bb\label{3.3.3}
u(t_0,x)=0,\quad x\in\R.
\ee

\begin{itemize}
    \item {\bf Use of conserved quantities.} It is known, see \cite[Lemma 4.1]{tu}, that 
$$
{\cal H}(t)=\int_\R (u_x^2+u_{xx}^2)(t,x)dx
$$
is a conserved quantity for the equation, from which we conclude that $u_x(t,\cdot)=0$, for all $t\in[0,T)$, because ${\cal H}(t_0)=0$. Since $u\rightarrow0$ as $|x|\rightarrow\infty$, we conclude that $u\equiv0$.
    
\item {\bf Use of local well-posedness.} For $u\in H^s(\R)$, $s>5/2$ it is known that \eqref{3.3.1} is locally well posed, see \cite[Section 3]{tu}. Therefore, like in the previous cases, \eqref{3.3.3} implies that $u$ vanishes everywhere.
\end{itemize}

\subsection{The $\pi-$Camassa-Holm system}\label{sec3.4}

We now use theorems \ref{teo2.2} and \ref{teo2.4} to establish unique continuation results for solutions of the $\pi-$Camassa-Holm system \cite{koh}
\bb\label{3.4.1}
\left\{\ba{l}
m_t+um_x+2u_xm=-\pi(\rho)\rho_x,\\
\\
\pi(\rho)_t=-(\pi(\rho)u)_x,
\ea
\right.
\ee
where $m=u-u_{xx}$ and 
$$
\pi(\rho)=\rho-\int_\s \rho dx.
$$

We can rewrite \eqref{3.4.1} as the following system of evolution equations
\bb\label{3.4.2}
\left\{\ba{l}
\ds{u_t+uu_x=-\p_x\Lambda^{-2}\Big(u^2+\f{u_x^2}{2}+\f{\pi(\rho)^2}{2}\Big)},\\
\\
\ds{\pi(\rho)_t+u\rho_x=-\pi(\rho)u_x}.
\ea
\right.
\ee

Noticing that $\p_x\pi(\rho)=\rho_x$, comparing \eqref{3.4.2} with \eqref{1.0.2}, we conclude that $\uu=(u,\pi(\rho))$, and
$$
A(\uu)\uu=\begin{pmatrix}
u\p_x & 0\\
\\
0 & u\p_x
\end{pmatrix}
\begin{pmatrix}
u\\
\\
\pi(\rho)
\end{pmatrix},\quad
g=\begin{pmatrix}
\ds{-\p_x\Lambda^{-2}\Big(u^2+\f{u_x^2}{2}+\f{\pi(\rho)^2}{2}\Big)}\\
\\
-\pi(\rho)u_x
\end{pmatrix}=:\begin{pmatrix}
g^1(\uu)\\
\\
g^2(\uu)
\end{pmatrix}.
$$

In addition, we see that {\bf G0}--{\bf G3} are satisfied whenever $(u,\rho)\in C^0([0,T);H^s(\s)\times H^{s-1}(\s))\cap C^1((0,T);H^{s-1}(\s)\times H^{s-2}(\s))$, $s>5/2$, and ${\cal B}=H^s(\s)\times H^{s-1}(\s))$, $s>5/2$.

Suppose that conditions {\bf C3} and {\bf C4} of theorem \ref{teo2.2} hold and consider $F^1_{t_0}(x)=g^1(\uu(t_0,x))$, that is,
\bb\label{3.4.3}
F^1_{t_0}(x)=-\p_x\Lambda^{-2}\Big(u^2+\f{u_x^2}{2}+\f{\pi(\rho)^2}{2}\Big)(t_0,x).
\ee

From \eqref{3.4.3}, \eqref{3.4.2} and using {\bf C4}, we conclude that $F_{t_0}^1(a)=F_{t_0}^1(b)=0$. Moreover, the fundamental theorem of calculus reads
$$
0=F_{t_0}^1(b)-F_{t_0}^1(a)=\int_{a}^b\Lambda^{-2}\Big(u^2+\f{u_x^2}{2}+\f{\pi(\rho)^2}{2}\Big)(t_0,s)ds,
$$
forcing us to conclude that $u(t_0,\cdot)=0$, as well as $\pi(\rho)(t_0,\cdot)=0$. 

\begin{itemize}
    \item {\bf Use of conserved quantities.} Since
$
{\cal H}(t)=\|u(t,\cdot)\|^2_{H^1(\s)}+\|\pi(\rho)(t,\cdot)\|^2_{L^2(\s)}
$
is a conserved quantity for \eqref{3.4.1}, see \cite[Lemma 3.1]{ma}, and ${\cal H}(t_0)=0$, we conclude that $u\equiv0$, as well as $\pi(\rho)\equiv0$.
    
    \item {\bf Use of local well-posedness.} In \cite[Corollary 3]{koh}, \cite[Theorem 2.1]{ma} it was shown that \eqref{3.4.1} is locally well-posed provided that the initial data $(u_0,\rho_0)\in H^{s}\times H^{s-1}$, $s>5/2$. Since $u(t_0,\cdot)=0$ and $\pi(\rho)(t_0,\cdot)=0$, theorem \ref{teo2.4} implies that $u\equiv0$, as well as $\pi(\rho)\equiv0$.
\end{itemize}

\section{Application to the generalised Boussinesq equation}\label{sec4}

We now apply our machinery to equation \eqref{1.0.4}, that is quite illustrative inasmuch as it shows how our ideas can be employed to a class of equations larger than that one might think at first glance. Our goal in this section is to prove the following result.

\begin{theorem}\label{teo4.1}
Assume that $f$ is a non-negative and smooth function, vanishing only at $0$, and $s\geq0$. If a solution $u\in C^2([0,T);H^s(\R)\cap L^\infty(\R))$ of \eqref{1.0.4} vanishes on a non-empty and open set $\Omega\subseteq [0,T)\times\R$, then $u\equiv0$.
\end{theorem}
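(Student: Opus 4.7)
The plan is to adapt the positive-kernel idea behind Theorem~\ref{teo2.1} to the second-order-in-time setting of~\eqref{1.0.4}. Applying $\Lambda^{-2}$ to \eqref{1.0.4} and using the identity $\p_x^2\Lambda^{-2}=\Lambda^{-2}-1$, I would first rewrite the equation in the equivalent form
$$
u_{tt}+f(u)=\Lambda^{-2}f(u),
$$
whose right-hand side is the convolution of the non-negative quantity $f(u)$ against the strictly positive kernel $g(x)=e^{-|x|}/2$.

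Since $\Omega$ is open and nonempty in $[0,T)\times\R$, I can choose an interior point $(t_0,x_0)\in\Omega$ with $t_0>0$, together with reals $a<b$ and $\eps>0$ satisfying $t_0-\eps>0$ and $(t_0-\eps,t_0+\eps)\times(a,b)\subseteq\Omega$. On this box $u\equiv0$, and by $u\in C^2([0,T);H^s\cap L^\infty)$ both $u_t$ and $u_{tt}$ also vanish there. Evaluating the reformulated equation pointwise on the box and using $f(0)=0$ gives
$$
0=\int_\R g(x-y)f(u(t,y))\,dy
$$
for every $(t,x)$ in the box. Because $g>0$ on all of $\R$ and $f(u)\geq0$---which is where the hypothesis $f\geq0$ is essential---the vanishing of this integral forces $f(u(t,y))=0$ for a.e.\ $y\in\R$; since $f$ vanishes only at $0$, we obtain $u(t,\cdot)\equiv0$ on all of $\R$ for every $t\in(t_0-\eps,t_0+\eps)$, i.e., $u$ vanishes on the whole horizontal strip $(t_0-\eps,t_0+\eps)\times\R$.

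It remains to propagate this to all of $[0,T)\times\R$. I would let $I\subseteq[0,T)$ be the maximal interval containing $t_0$ on which $u(t,\cdot)\equiv0$. Closedness of $I$ in $[0,T)$ is immediate from the continuity of $u$ in $t$. For openness at an endpoint $t^*\in[0,T)$, continuity combined with $u\in C^2$ gives $u(t^*,\cdot)=u_t(t^*,\cdot)=0$; then viewing the reformulated equation as the second-order Banach-space ODE $\ddot u=\Lambda^{-2}f(u)-f(u)$ in $L^\infty(\R)$, whose right-hand side is locally Lipschitz in $u$ (because $f$ is smooth and $\Lambda^{-2}$ maps $L^\infty$ boundedly into itself by Young's inequality against the $L^1$ kernel $g$), a standard Picard/Gr\"onwall argument forces $u\equiv0$ on a neighborhood of $t^*$, contradicting the maximality of $I$ unless $I=[0,T)$.

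I expect this propagation-in-time step to be the main obstacle. The extraction of $u(t,\cdot)\equiv0$ from vanishing on the box is a direct application of the positive-kernel principle underlying Section~\ref{sec2.3}, but the first-order well-posedness framework of Section~\ref{sec2.4} does not apply as-is to the second-order equation~\eqref{1.0.4}. One must either phrase the uniqueness statement in the Banach-space ODE language above, or find a suitable conserved quantity for \eqref{1.0.4} and mimic Theorem~\ref{teo2.1}; I do not see an obvious such quantity, so the ODE route appears the cleanest.
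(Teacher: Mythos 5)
Your argument is correct, and its first half coincides with the paper's: the paper likewise inverts $\Lambda^{2}$ to pass to the nonlocal form, restricts to a box ${\cal R}=I_\eps\times[a,b]\subseteq\Omega$ on which $u$, $u_t$, $u_{tt}$ and $f(u)$ all vanish, and uses the strict positivity of the kernel $e^{-|x|}/2$ together with $f\geq 0$ and $f^{-1}(0)=\{0\}$ to conclude $u(t,\cdot)\equiv 0$ for every $t$ in the time slice. Where you diverge is the propagation in time. The paper simply invokes the well-posedness and continuous dependence result of Constantin and Molinet \cite[Theorem 1]{moli} for the Cauchy problem \eqref{1.0.4} with data posed at $t_0$ (zero data gives the zero solution), and then gives a second, independent argument via a conserved quantity; you instead recast the nonlocal equation as the Banach-space ODE $\ddot u=\Lambda^{-2}f(u)-f(u)$ in $L^\infty(\R)$, observe that the right-hand side is locally Lipschitz (Young's inequality for the $L^1$ kernel plus smoothness of $f$ on bounded sets), and run a Picard/Gr\"onwall uniqueness argument inside an open-closed connectedness scheme. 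This is a legitimate and essentially self-contained alternative to citing \cite{moli}, at the modest cost of having to verify that the distributional form \eqref{1.0.4} and the nonlocal ODE form are equivalent for $u\in C^2([0,T);H^s(\R)\cap L^\infty(\R))$ and that $t\mapsto(u(t,\cdot),u_t(t,\cdot))$ is a genuine $C^1$ curve in $L^\infty\times L^\infty$ --- both of which do hold here. One correction to your closing remark: the conserved quantity you ``do not see'' exists and is used in the paper, namely
\begin{equation*}
{\cal H}(t)=\tfrac{1}{2}\|z(t,\cdot)\|^2_{H^1(\R)}+\int_\R P(u(t,x))\,dx,\qquad P(x)=\int_0^x f(s)\,ds,\quad z_x:=u_t,
\end{equation*}
taken from \cite{moli}; since $f\geq 0$ vanishes only at $0$, the potential term is non-negative and vanishes exactly when $u(t,\cdot)=0$, so ${\cal H}(t_0)=0$ forces $u\equiv 0$ exactly as in Theorem \ref{teo2.1}. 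So the conserved-quantity route is available after all, and your ODE route is a third, equally valid way to finish.
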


Before proving the theorem above, let us first explore a bit more \eqref{1.0.4} and make some observations that will be useful for our purposes. 

Let us assume that $u\in C^2([0,T);H^s(\R)\cap L^\infty(\R))$, for some $T>0$, and $s\geq0$. Apparently \eqref{1.0.4} does not seem to belong to the class \eqref{1.0.2}. Let us define $v:=u_t$. From \eqref{1.0.4} we obtain $v_{t}=\p_x^2 f(u)+v_{txx}$, that is $\Lambda^{2}v_t=f(u)$. Inverting the operator $\Lambda^{-2}$, using the identity $\p_x^{2}\Lambda^{-2}=\Lambda^{-2}-1$, and the constraint relating $u$ and $v$, we can rewrite \eqref{1.0.4} as the system
\bb\label{4.0.1}
\left\{\ba{lcl}
v_t&=&\Lambda^{-2}f(u)+f(u),\\
\\
u_t&=&v.
\ea\right.
\ee

Let us assume that $\Omega$ is a non-empty open set in which $u$ vanishes. Then we can find numbers $t^\ast$, $a$, $b$, and $\eps>0$ such that ${\cal R}=I_\eps\times[a,b]\subseteq\Omega$, where $I_\eps=[t^\ast-\eps,t^\ast+\eps]$. We note that for each $t\in I_\eps$ fixed, conditions ${\bf C3}$ and ${\bf C4}$ in theorem \ref{teo2.2} are satisfied for the pair $(u,v)$.

We recall that $f$ is a smooth, non-negative function, vanishing only at $0$. Defining $F_t(x)=\Lambda^{-2}f(u(t,x))$, we straightforwardly conclude that $F_t(x)\geq0$ for all $x$ and it vanishes if and only if $u(t,\cdot)\equiv0$. 

As long as $(t,x)\in{\cal R}$, the first equation in \eqref{4.0.1} yields
$$
0=v_t-f(u(t,x))=\Lambda^{-2}f(u(t,x))=\f{1}{2}\int_\R e^{-|x-y|}f(u(t,y))dy,
$$
which implies that $u(t,x)=0$, for all $t\in I_\eps$ and $x\in\R$.

\begin{proof}Let us fix $t_0\in (t^\ast-\eps,t^\ast+\eps)$ and define $u_0(\cdot)=u(t_0,\cdot)$ and $u_1(\cdot)=u_t(t_0,\cdot)$. Since  $u\in C^2([0,T);H^s(\R)\cap L^\infty(\R))$, $s\geq0$, we then conclude that $u_0,u_1\in H^s(\R)\cap L^\infty(\R)$. Moreover,  $(u_0,u_1)(x)=(0,0)$ and then the solution of the problem
\bb\label{4.0.2}
\left\{\ba{lcl}
u_{tt}=\p_x^2f(u)+u_{ttxx},\\
\\
u(t_0,x)=u_0(x),\\
\\
u_t(t_0,x)=v_0(x),
\ea\right.
\ee
vanishes identically, since \cite[Theorem 1]{moli} assures that \eqref{4.0.2} is well-posed and its solution depends continuously on the initial datum.

Let us now give a more elaborate proof of the same fact, but using conserved quantities. To this end, we introduce new functions and use a conserved quantity found in \cite{moli} to re-obtain  our conclusion.

Substituting $z_x:=u_t$ and $w_x:=u$, see \cite[page 1070]{moli}, into \eqref{1.0.4} we obtain $z_{tx}=\p_x^2 F(w_x)+z_{txxx}$ and $w_{tx}=z_x$. Integrating these two equations and noticing that $(z,w)\rightarrow0$ at infinity (note that both belong to $H^{s+1}(\R)$ and $s\geq0$), we conclude that $(z,w)$ satisfies 
\bb\label{4.0.3}
\left\{\ba{lcl}
z_t&=&\p_x(1-\p_x^2)^{-1}F(w_x),\\
\\
w_t&=&z,
\ea\right.
\ee
which is clearly of the type \eqref{1.0.2}.

From \cite[Equation 4.7]{moli} (see also \cite[Theorem 4]{moli}), \eqref{4.0.3} has the conserved quantity
\bb\label{4.0.4}
{\cal H}(t)=\f{1}{2}\|z(t,\cdot)\|^2_{H^1(\R)}+\int_\R P(u(t,x))dx,
\ee
where 
\bb\label{4.0.5}
P(x)=\int_0^xf(s)ds.
\ee

Since $f$ is non-negative and $f(x)=0$ if and only if $x=0$, we then conclude that 
$$
\int_\R P(u(t,x))dx\geq0
$$
and it vanishes if and only if $u(t,x)=0$, $x\in\R$.
 
For $t_0\in I_\eps$ and the conditions on $z$, we conclude that $z(t_0,x)=0$. On the other hand, we have already concluded that $u(t_0,\cdot)=0$, which means the quantity ${\cal H}(t)$ vanishes at $t=t_0$. This implies that for any other $t$, we have
$$0={\cal H}(t_0)={\cal H}(t)\geq\int_\R P(u(t,x))dx\geq0,$$
and consequently, $f(u(t,x))=0$ and $u(t,x)=0$ as well, which implies that $u\equiv0.$
\end{proof}

\section{The modified Euler-Poisson system}\label{sec5}

Noting the presence of the Helmholtz operator $\Lambda^2=1-\p_x^2$ in the second equation of the system \eqref{1.0.5}, we can rewrite it in the formal and more convenient form
\bb\label{5.0.1}
\left\{
\ba{l}
\rho_t+(u\rho)_x=0,\\
\\
u_t+uu_x+\p_x\Lambda^{-2}\rho=0.
\ea
\right.
\ee

The set of equations \eqref{5.0.1} can be obtained from the Euler-Poisson system of equations after linearisation of one of its equations, see \cite[Introduction]{tiglay} for further details. Moreover, by taking $\uu=(u,\rho)$, it can put into the form \eqref{1.0.2}, with
$$
A(\uu)\uu=\begin{pmatrix}
u\p_x & 0\\
\\
0 & u\p_x
\end{pmatrix}
\begin{pmatrix}
u\\
\\
\rho
\end{pmatrix},\quad
g=\begin{pmatrix}
\ds{-\p_x\Lambda^{-2}(\rho)}\\
\\
-\rho u_x
\end{pmatrix}=:\begin{pmatrix}
g^1(\uu)\\
\\
g^2(\uu)
\end{pmatrix}.
$$

Furthermore, it is integrable in view of its bi-Hamiltonian structure \cite[Theorem 3]{tiglay}, and its Hamiltonians are the functionals
\bb\label{5.0.2}
\ba{lcl}
{\cal H}_1(t)&=&\ds{\int_\R(\rho u^2+(\Lambda^{-2}\rho)^2+(\Lambda^{-2}\rho)^2)dx},\\
\\
{\cal H}_2(t)&=&\ds{\int_\R u\rho dx}.
\ea
\ee

It is also worth mentioning a third conserved quantity for the modified Euler-Poisson system, given by
\bb\label{5.0.3}
{\cal H}(t)=\int_\R \rho dx.
\ee

The quantities in \eqref{5.0.2}--\eqref{5.0.3} are all invariants along time for solutions with enough decaying as $|x|\rightarrow\infty$. In particular, they are conserved for the sort of solutions considered in \cite{him-dcds}.

\begin{definition}\label{def5.1}
    A pair $(u,\rho)$ is said to be a conservative solution for the system \eqref{1.0.5} if it is a solution of the system \eqref{5.0.1} and has the functional \eqref{5.0.3} as a conserved quantity.
\end{definition}

We observe that as long as $\rho(t,\cdot)$ does not change it sign (that is, $\rho(t,x)\geq0$ or $\rho(t,x)\leq0$, for all $(t,x)$ such that the solution exists), then the functional in \eqref{5.0.3} is nothing but $\|\rho(t,\cdot)\|_{L^1(\R)}$.

Our main result in this section is the following.

\begin{theorem}\label{teo5.1}
    Assume that $(u,\rho)$ is a conservative, strong solution of the system \eqref{1.0.5}  defined on $[0,T)\times\R$, such that $\rho$ is non-negative. If there exists an non-empty, open set $\Omega\subseteq[0,T)\times\R$, such that $(u,\rho)\big|_\Omega\equiv0$, then $\rho\equiv0$ and $u$ is a solution of the equation $u_t+uu_x=0$.
\end{theorem}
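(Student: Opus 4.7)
The plan is to adapt the conserved-quantity argument of Section \ref{sec2.3}, but with the twist that the desired conclusion is only $\rho\equiv0$ (with $u$ automatically reduced to a Burgers equation), not the full vanishing of $(u,\rho)$. First, since $\Omega$ is open and non-empty, I would fix $t_0\in(0,T)$ together with $a<b$ such that an honest two-dimensional neighbourhood of $\{t_0\}\times[a,b]$ is contained in $\Omega$. On this neighbourhood both $u$ and $\rho$ vanish identically, so $u(t_0,\cdot)$, $\rho(t_0,\cdot)$, $u_t(t_0,\cdot)$ and $u_x(t_0,\cdot)$ are all $0$ on $[a,b]$.

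Next, evaluating the second equation of \eqref{5.0.1} at $t=t_0$, $x\in[a,b]$, every term other than $\p_x\Lambda^{-2}\rho$ is zero, so
$$\p_x\Lambda^{-2}\rho(t_0,x)=0,\qquad x\in[a,b].$$
Hence $w(x):=\Lambda^{-2}\rho(t_0,x)$ is constantly equal to some $C$ on $[a,b]$. Applying $\Lambda^{2}=1-\p_x^2$ to $w$ on $(a,b)$ gives $\rho(t_0,x)=\Lambda^{2}w(x)=C$ for $x\in(a,b)$; but $\rho(t_0,\cdot)\equiv 0$ there, so $C=0$. Therefore, for any fixed $x_0\in[a,b]$,
$$0=\Lambda^{-2}\rho(t_0,x_0)=\f{1}{2}\int_\R e^{-|x_0-y|}\rho(t_0,y)\,dy,$$
and since the kernel is strictly positive while $\rho(t_0,\cdot)\geq 0$, this forces $\rho(t_0,\cdot)\equiv 0$ on $\R$.

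At this point the conservativity hypothesis enters: by \eqref{5.0.3} we have ${\cal H}(t)={\cal H}(t_0)=\int_\R\rho(t_0,x)\,dx=0$ for every $t\in[0,T)$, and combined with $\rho\geq 0$ this yields $\rho\equiv 0$ on $[0,T)\times\R$. Substituting $\rho\equiv 0$ into the second equation of \eqref{5.0.1} and using $\Lambda^{-2}0=0$ leaves precisely $u_t+uu_x=0$, which is the claimed inviscid Burgers equation.

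The delicate step, and the one most sensitive to the hypotheses, is the passage from $\p_x\Lambda^{-2}\rho(t_0,\cdot)=0$ on $[a,b]$ to $\rho(t_0,\cdot)\equiv 0$ on all of $\R$: here the non-negativity of $\rho$ and the strict positivity of the convolution kernel of $\Lambda^{-2}$ over $\R$ are both essential, and neither can be relaxed by this argument. Note also that no attempt is made to conclude that $u$ itself vanishes identically---and none is needed, since once $\rho$ has been eliminated the $u$-equation decouples into scalar Burgers, whose solutions certainly need not be identically zero merely because they vanish on an open set.
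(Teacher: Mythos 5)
Your proof is correct and follows essentially the same route as the paper: the paper likewise evaluates the second equation of \eqref{5.0.1} on $\{t_0\}\times[a,b]$ to get $F_{t_0}=\p_x\Lambda^{-2}\rho(t_0,\cdot)=0$ there, deduces $\Lambda^{-2}\rho(t_0,x_0)=0$ at an interior point (via the Fundamental Theorem of Calculus and the sign of $F_{t_0}'$, where you instead invert $\Lambda^{-2}$ locally---a harmless variation), uses the strict positivity of the kernel together with $\rho\geq0$ to conclude $\rho(t_0,\cdot)\equiv0$, and finishes with the conserved quantity \eqref{5.0.3}. No gaps.
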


Let us show that system \eqref{5.0.1} admits a solution $(u,\rho)$, with $\rho(t,x)\geq0$ for all $[0,T)\times\R$. To this end, let us define $q(t,x)$ through the ODE
\bb\label{5.0.4}
\left\{
\ba{lcl}
q_t(t,x)&=&u(t,q(t,x)),\\
\\
q(0,x)&=&x.
\ea
\right.
\ee

Under mild conditions on $u$ we can guarantee the existence and uniqueness of a unique solution for the system above, for each fixed $x$. Therefore, varying $x$ we then have a map $(t,x)\mapsto q(t,x)$, satisfying $q(0,x)=x$. We will show that depending on the regularity of the solutions of the system \eqref{5.0.1}, for each fixed $t$, the function $q(t,\cdot)$ is an increasing diffeomorphism from $\R$ into itself.

\begin{theorem}\label{teo5.2}
    Assume that $(u_0,\rho_0)\in H^3(\R)\times H^{2}(\R)$ and $\rho_0$ is a non-negative function. Then the corresponding solution of \eqref{5.0.1} subject to $(u(0,x),\rho(0,x))=(u_0(x),\rho_0(x))$ is a conservative solution for the system \eqref{1.0.5} and the property of being non-negative persists for $\rho(t,x)$ as long as it exists.
\end{theorem}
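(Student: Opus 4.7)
The plan is to split the theorem into its two claims and treat them separately, with both following from analysis along the characteristic flow generated by $u$.

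For the conservation claim, I would integrate the first equation of \eqref{5.0.1} in $x$ over $\R$. Local well-posedness of \eqref{5.0.1} in $H^3(\R)\times H^2(\R)$ (via Kato-type arguments; see e.g.\ \cite{tiglay}) produces a solution $(u,\rho)\in C^0([0,T);H^3\times H^2)\cap C^1([0,T);H^2\times H^1)$. Since $H^2(\R)$ is a Banach algebra embedded in $C^1_0(\R)$, the product $u\rho$ vanishes at $\pm\infty$, so integrating $\rho_t=-(u\rho)_x$ in $x$ produces $\ds\f{d}{dt}{\cal H}(t)=0$, whence the solution is conservative in the sense of Definition \ref{def5.1}.

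For the non-negativity claim, I would use the flow map $q(t,x)$ defined in \eqref{5.0.4}. First I would check that for each $t\in[0,T)$ the map $q(t,\cdot):\R\to\R$ is an increasing $C^1$-diffeomorphism. The embedding $H^3(\R)\hookrightarrow C^2(\R)\cap W^{1,\infty}(\R)$ makes the right-hand side of \eqref{5.0.4} globally Lipschitz in $x$, so Cauchy-Lipschitz applies globally and $q(t,\cdot)$ is $C^1$-smooth. Differentiating \eqref{5.0.4} in $x$ gives $q_x(t,x)=\exp\bigl(\int_0^tu_x(s,q(s,x))\,ds\bigr)>0$ (strict monotonicity), while the bound $|q(t,x)-x|\leq t\sup_{s\in[0,t]}\|u(s,\cdot)\|_{L^\infty}$ forces $q(t,x)\to\pm\infty$ as $x\to\pm\infty$ (surjectivity). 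Rewriting the first equation of \eqref{5.0.1} in the transport form $\rho_t+u\rho_x=-u_x\rho$ and evaluating along a characteristic yields the scalar linear ODE $\ds\f{d}{dt}\rho(t,q(t,x))=-u_x(t,q(t,x))\,\rho(t,q(t,x))$, whose explicit solution is
$$\rho(t,q(t,x))=\rho_0(x)\exp\Big(-\int_0^tu_x(s,q(s,x))\,ds\Big).$$
Since $\rho_0\geq 0$ by hypothesis, the right-hand side is non-negative, and surjectivity of $q(t,\cdot)$ upgrades this to $\rho(t,y)\geq 0$ for every $y\in\R$.

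I expect the main technical point to be a careful bookkeeping of the regularity: one must verify that the hypothesis $(u_0,\rho_0)\in H^3\times H^2$ really propagates enough smoothness in time so that $u$ and $u_x$ are jointly continuous and locally bounded on $[0,T)\times\R$, as is required both for the Picard-Lindelöf argument defining $q$ and for the exponential representation of $\rho$ along characteristics. Once this is established, both claims follow from the arguments above; as a consistency check, one may note that the identity $\rho(t,q(t,x))\,q_x(t,x)=\rho_0(x)$, obtained by combining the two exponential formulas, reproduces the conservation of $\int_\R\rho\,dx$ by a change of variables $y=q(t,x)$.
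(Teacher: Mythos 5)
Your proof is correct and follows essentially the same route as the paper: both establish that the flow $q(t,\cdot)$ of \eqref{5.0.4} is an increasing diffeomorphism of $\R$ via the formula $q_x(t,x)=\exp\bigl(\int_0^t u_x(\tau,q(\tau,x))\,d\tau\bigr)>0$, and then derive the representation $\rho(t,q(t,x))=\rho_0(x)\exp\bigl(-\int_0^t u_x(\tau,q(\tau,x))\,d\tau\bigr)$ along characteristics to propagate non-negativity. You are in fact somewhat more complete than the paper, which cites \cite{const-f} for surjectivity of $q(t,\cdot)$ where you give the elementary bound $|q(t,x)-x|\leq t\sup_s\|u(s,\cdot)\|_{L^\infty}$, and which leaves the conservation of $\int_\R\rho\,dx$ implicit (justified only by the earlier remark on decaying solutions) where you prove it directly by integrating $\rho_t=-(u\rho)_x$.
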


\begin{proof}
Since $(u_0,\rho_0)\in H^3(\R)\times H^{2}(\R)$, \cite[Theorem 1.3]{him-dcds} guarantees that $u\in C^0([0,T),H^3(\R))$, for some $T>0$. Proceeding as in \cite[Theorem 1]{tiglay}, we can also conclude that $u\in C^1([0,T),H^2(\R))$, wherefrom we conclude that $u$ and $u_x$ are both Lipschitz, bounded and Lipschitz with respect to $x$ and continuous in $t$.

    By letting $x$ varies in \eqref{5.0.4}, we can differentiate it with respect to $x$, to obtain
    $$
    \left\{
\ba{lcl}
\p_tq_x(t,x)&=&u_x(t,q(t,x))q_x,\\
\\
q_x(0,x)&=&1,
\ea
\right.
    $$
whose solution is 
$$\ds{q_x(t,x)=e^{\int_0^t u_x(\tau,x)d\tau}>0.}$$
This shows that $q(t,\cdot)$ is an increasing diffeomorphism from $\R$ into its image, $t\in[0,T_0)$, for some $T_0\in(0,T)$. The proof that $T_0=T$ and $q(t,\R):=\{q(t,x),\,x\in\R\}=\R$, $t\in[0,T)$, follows the same steps as \cite[Theorem 3.1]{const-f}, and for this reason is omitted.

Now we evaluate $\rho$ at $(t,q(t,x))$ and use the first equation in \eqref{5.0.1} and \eqref{5.0.4} to arrive at
$$
\f{d}{dt}\rho(t,q)=\rho_t(t,q)+\rho_x(t,q)q_t=-\rho(t,q)u_x(t,q),
$$
whose integration gives
\bb\label{5.0.5}
\rho(t,q(t,x))=\rho_0(x)e^{-\int_0^tu_x(\tau,q(\tau,x))d\tau}.
\ee

Since $q(t,x)$ is a diffeomorphism from $\R$ into itself and $\rho_0$ is non-negative, so do is $\rho(t,x)$.
\end{proof}

{\bf Proof of theorem \ref{teo5.1}.} For each $t\in[0,T)$, let us define
\bb\label{5.0.6}
F_t(x)=(\p_x\Lambda^{-2}\rho)(t,x),
\ee
and thus, $F_t(\cdot)\in C^1(\R)$. 

From the second equation in \eqref{5.0.1} and any $t\in(0,T)$, we can also express the function in \eqref{5.0.6} by
\bb\label{5.0.7}
F_t(x)=-(u_t+uu_x)(t,x),
\ee
hence $F_t(x)=0$, provided that $(t,x)\in\Omega$. Therefore, we can find $\{t_0\}\times[a,b]\subseteq \Omega$, with $a<b$, such that $F_{t_0}(x)=0$, $a\leq x\leq b$. In particular, $F_{t_0}(a)=F_{t_0}(b)=0$. Theorem \ref{teo5.1} is a straightforward corollary of the following result.

\begin{theorem}\label{teo5.3}
    Under the conditions in theorem \ref{teo5.1}, if there exists $a<b$ and $t_0\in\R$ such that $\{t_0\}\times[a,b]\subseteq [0,T)\times\R$ and $F_{t_0}(x)=0$, $x\in[a,b]$, then $\rho\equiv0$.
\end{theorem}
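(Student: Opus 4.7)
The plan is to convert the hypothesis on $F_{t_0}$ into information about the auxiliary function $\phi := \Lambda^{-2}\rho(t_0,\cdot)$, exploit the identity $\partial_x^2 \Lambda^{-2} = \Lambda^{-2} - 1$ to pin down $\rho(t_0,\cdot)$ on $[a,b]$, and then propagate the vanishing to all of $\mathbb{R}$ first via positivity of the convolution kernel of $\Lambda^{-2}$ and then via conservation of $\mathcal{H}(t)=\int_\mathbb{R}\rho(t,x)\,dx$.

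First I would set $\phi(x):=\Lambda^{-2}\rho(t_0,x)$, so that $F_{t_0}(x)=\phi'(x)$ by \eqref{5.0.6}. The hypothesis $F_{t_0}\equiv 0$ on $[a,b]$ then says $\phi\equiv C$ on $[a,b]$ for some constant $C\geq 0$. Applying the identity $\partial_x^2\Lambda^{-2}=\Lambda^{-2}-1$ yields $\phi''=\phi-\rho(t_0,\cdot)$; since $\phi''\equiv 0$ on $(a,b)$, it follows that $\rho(t_0,x)=C$ for $x\in(a,b)$.

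Next I would pin down $C=0$. Here I use that the interval $[a,b]$ is chosen (as in the discussion preceding the theorem) so that $\{t_0\}\times[a,b]\subseteq\Omega$, and the $\Omega$-hypothesis inside the conditions of Theorem~\ref{teo5.1} gives $\rho(t_0,\cdot)\equiv 0$ on $[a,b]$. Combined with $\rho(t_0,x)=C$ on $(a,b)$, this forces $C=0$, hence $\phi(x)=0$ on $[a,b]$. Evaluating the convolution representation at any $x_0\in[a,b]$,
\[0=\phi(x_0)=\frac{1}{2}\int_\mathbb{R} e^{-|x_0-y|}\,\rho(t_0,y)\,dy,\]
and since $\rho(t_0,\cdot)\geq 0$ and the kernel is strictly positive, $\rho(t_0,y)=0$ for a.e.\ $y\in\mathbb{R}$; by continuity of the strong solution, $\rho(t_0,\cdot)\equiv 0$ on $\mathbb{R}$.

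To conclude, the conservation law (Definition~\ref{def5.1}) gives $\mathcal{H}(t)=\mathcal{H}(t_0)=0$ for every $t\in[0,T)$, and non-negativity of $\rho$ forces $\rho(t,\cdot)\equiv 0$ for every such $t$, i.e.\ $\rho\equiv 0$. The hard step is securing $C=0$: with only $\phi'\equiv 0$ on $[a,b]$ and $\rho\geq 0$ one can build smooth non-negative densities in $L^1(\mathbb{R})$ satisfying $\phi\equiv C>0$ on $[a,b]$ (by placing compensating mass outside the interval so that the exponential moments $\int_{-\infty}^a e^{y}\rho\,dy$ and $\int_b^{\infty} e^{-y}\rho\,dy$ equal $Ce^a$ and $Ce^{-b}$ respectively), so the additional vanishing of $\rho(t_0,\cdot)$ on $[a,b]$ inherited from the $\Omega$-hypothesis is what actually closes the argument.
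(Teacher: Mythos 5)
Your proof is correct and follows essentially the same route as the paper's: both reduce the hypothesis to the vanishing of $\Lambda^{-2}\rho(t_0,\cdot)$ at an interior point of $(a,b)$, invoke the strict positivity of the kernel $e^{-|x|}/2$ together with $\rho\geq 0$ to conclude $\rho(t_0,\cdot)\equiv 0$ on $\R$, and then finish via conservation of $\int_\R\rho\,dx$. Your explicit isolation of the step $C=0$ is in fact a useful clarification rather than a deviation: the paper's assertion that $F_{t_0}'=(\Lambda^{-2}\rho)(t_0,\cdot)$ on $[a,b]$ already presupposes $\rho(t_0,\cdot)=0$ there (via $\p_x^2\Lambda^{-2}=\Lambda^{-2}-1$ and the choice $\{t_0\}\times[a,b]\subseteq\Omega$ made just before the theorem), which is precisely the hypothesis your construction of non-negative densities with $\Lambda^{-2}\rho\equiv C>0$ on $[a,b]$ shows cannot be dispensed with.
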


For $x\in[a,b]$, we have $F_{t_0}'(x)=(\Lambda^{-2}\rho)(t_0,x)$, and since it is non-negative, the fact that $F_{t_0}(a)=F_{t_0}(b)$ forces $F'_{t_0}(x)=0$ within $(a,b)$. As a result, fixed $x_0\in(a,b)$, we have
$$
\int_\R e^{-|y-x_0|}\rho(t_0,y)dy=0,
$$
that is, $\rho(t_0,x)=0$, $x\in\R$. As a consequence, the functional \eqref{5.0.3} vanishes at $t=t_0$. Since it is constant and $\rho$ is non-negative, then $\rho$ must be $0$. \hfill$\square$

\begin{remark}\label{rem5.1}
    Formula \eqref{5.0.5} is equivalent to \cite[Equation (70)]{him-dcds}, and jointly with \cite[Equation (3)]{him-dcds} they imply that the solution $n$ (which is nothing but our $\rho$ in \eqref{5.0.1}) considered in \cite{him-dcds} is non-negative. 
\end{remark}

\begin{remark}\label{rem5.2}
    Replacing \eqref{5.0.3} by the functional ${\cal H}_1(t)$ (see \eqref{5.0.2}) in definition \ref{def5.1} we would still have theorem \ref{teo5.1} valid. 
\end{remark}

\begin{remark}\label{rem5.3}
    Theorem \ref{teo5.1} would still hold if we assume that $\rho$ is non-positive, but differently from Remark \ref{rem5.2}, it would not be possible to prove it replacing \eqref{5.0.3} by ${\cal H}_1(t)$ in definition \ref{def5.1}. While \eqref{5.0.3} would still be, up to a sign, equivalent to the $L^1(\R)-$norm of $\rho$, the function ${\cal H}_1$ would no longer be non-negative. 
\end{remark}

\begin{remark}\label{rem5.4}
  \cite[Theorem 1.1]{him-dcds} tells us that if the initial datum $(u_0,\rho_0)$ satisfies a certain exponential decay, $\rho_0$ is non-negative, and if the corresponding solution emanating from \eqref{5.0.1}, subject to such an initial condition, has the same sort of decay at a latter time, then $\rho\equiv0$ and $u$ is a solution of the equation $u_t+uu_x=0$.  Our theorem \ref{teo5.1} gives a different proof for the same fact, but without using the exponential decay condition: we use the fact that the solution vanishes on an open set instead. Moreover, the same fact can also be concluded as long as we assume $\rho$ is non-positive.
\end{remark}

\begin{remark}\label{rem5.5}
 With little effort we can proof a periodic version of theorems \ref{teo5.1} -- \ref{teo5.3}. In fact, their proofs are essentially the same replacing $\R$ by $\s\approx[0,1)$. The only technical detail to be taken is choosing $[a,b]\subseteq[0,1)$ in  theorem \ref{teo5.3}.
\end{remark}

\begin{remark}\label{rem5.6}
Unlike \cite{him-dcds}, we can give a unique continuation result for periodic solutions of \eqref{5.0.1} in view of Remark \ref{rem5.5}. In fact, the results proved in \cite{him-dcds} cannot be applied for periodic solutions since a sine qua non condition in \cite[Theorem 1.1]{him-dcds} is an exponential decay behavior of the solutions as $x\rightarrow\infty$, which shows a significant difference with our approach.
\end{remark}

\section{Discussion}\label{sec6}

The results reported in this paper have two different motivations: the one related to conserved quantities is based on the concepts of conservation of physical quantities, such as mass or energy, whereas the other, related to existence of solutions, is concerned with qualitative aspects of solutions for the dynamical system \eqref{1.0.2}. In the first case the independent variables $t$ and $x$ have the meaning of time and space, respectively.

Equations of the type \eqref{1.0.2} coming from Physics usually {\it do have} conserved densities, say $h([\uu])$, which gives rise to the conserved quantity
$$
{\cal H}(t)=\int_\Omega h([\uu])dx,
$$
associated to a solution $\uu$. These equations usually describe wave propagating in a media, such as water, and very often, a conserved density refers to mass or kinect energy densities, that are described by non-negative functions $h([\uu])$. If a mass density vanishes, then such fact implies on the non-existence of a media in which a wave could propagate. On the other hand, if the density refers to kinect energy, its vanishing means that the there is no perturbation in a medium, that is, we do not have any event causing a phenomenon. These observations are the main ideas behind the results developed in the subsection \ref{sec2.3}.

We note that while {\bf C0}, {\bf C1}, {\bf C3}, and {\bf C4} are properties of the {\it solution of the system}, {\bf C2}, {\bf C5}, {\bf C6}, and {\bf C7} are conditions arising from the {\it system/Cauchy problem}. Moreover, these attributes are used to build our results from the two perspectives aforementioned, that despite being conceptually rather different, both have a common crucial ingredient: the existence of a value $t_0$ for which a solution $u$ vanishes. If one is successful in finding such a value, one can then prove that the properties stated in conditions {\bf C1}; or {\bf C3} and {\bf C4}, persist, since the solution necessarily vanishes and these conditions are satisfied for the identically zero solution. The main difference between the two approaches is the way we extend to all values of $t$ the information we got to $t_0$. This can be done in the following forms:
\begin{itemize}

\item From a physical point of view, in case we are able to find a time $t_0$ for which the conserved quantity vanishes, we then conclude that it vanishes for each $t$ such that the solution exists, in view of its time invariance. Physically speaking, usually the conserved density vanishes if and only if the field variable $\uu$ vanishes.

\item From a mathematical perspective the results developed in the subsection \ref{sec2.4} are concerned with well-posed Cauchy problems. Remark \ref{rem2.5} shows that if we can find a value of $t$ for which the solution vanishes, it then vanishes everywhere in view of uniqueness. 

\end{itemize}

The two different views above are connected in the following way: if the mass or kinect energy of a physical system vanishes, then we do not have any non-zero initial data provoking a modification in the system. Mathematically speaking, we have a zero initial data to the Cauchy problem \eqref{2.2.1}. Still from mathematical eyes, the mass or kinect energy associated to conserved quantities very often corresponds to some norms in suitable Banach spaces.

Note that while the existence of a conserved quantity or the local well-posedness is a crucial ingredient to extend to any value of $t$ the same property known to $t_0$, it is the map $g$ in \eqref{1.0.2} that gives the information that the solution vanishes at some specific time, see conditions ${\bf C2}$ or ${\bf C6}$ in theorems \ref{teo2.1}--\ref{teo2.4}.

The aforementioned results have as an immediate corollary the following: given a solution $\uu$ of the system \eqref{1.0.2} satisfying certain conditions, if it vanishes on an open set $\Omega\subseteq[0,T)\times X$, then it necessarily vanishes identically. Hence, such a solution is global. This consequence is explicitly stated in the corollary of theorem \ref{teo2.1}, but an analogous result could be proved as a corollary for theorem \ref{teo2.3}. Moreover, theorem \ref{teo3.1} is a manifestation of this fact for scalar equations.

In \cite{linares} was proved a unique continuations result for the $b-$equation ($b\in[0,3]$) and, more generally, for equations of the type \eqref{3.0.1}, with functions $g$ and $G$ depending only on $u$ or $u_x$, or both, and $G$ is non-negative and vanishes only when $u=0$, see \cite[Theorem 1.6]{linares}. Following our language, the results in \cite{linares} were established using local well-posedness. Our theorem \ref{teo3.1} in particular, and the results presented in section \ref{sec2} in general, generalise these results not only for equations depending on higher order derivatives, but also for systems of equations.

Recently, conserved quantities have been used for establishing unique continuation properties for solutions of the Dullin-Gottwald-Holm equation \cite{freire-jpa} (see also its correction \cite{freire-cor}); the BBM equation \cite{raspa-mo}; 0-Holm-Staley equation \cite{pri-jde,nilay}; a generalisation of the CH equation \cite{igor-jmp}; and a geometrically integrable equation \cite{nazime}. All of these references are concerned to equations, while in \cite{freire-bi} the conservation of norms of solutions of a bi-Hamiltonian system (that has the CH as a particular case) was used to establish unique continuation of its solutions, giving a strong indication that the results (and approaches) considered in \cite{linares,freire-jpa,freire-cor} could be generalised and extended to systems of differential equations or for equations like \eqref{3.0.1} depending on a finite jet of the solution $\uu$.

Most of the works mentioned above are concerned with, or motivated by, hydrodynamic equations, and some of our examples as well. Therefore, it is natural that one wonders if our results could be applied to the KdV equation, that is a quite famous hydrodynamic model. It can be written as
$$
u_t=\al uu_x+u_{xxx}=:A_1 u,
$$
where $A_1=\al u\p_x+\p_x^3$, and $\al\neq0$ is constant that we do not impose any restriction. Comparing the KdV equation with \eqref{1.0.2} we see that $A=A_1$ and $g(u)\equiv0$. Therefore, none of the conditions {\bf C2} or {\bf C6} are satisfied and our results cannot be applied to it. 

Last but not least, it is worth mentioning that our machinery enable us to establish unique continuation properties for the solutions of \eqref{1.0.4}, which as far as the author knows, has not been proved yet.

\section{Conclusion}\label{sec7}

In this paper we establish conditions for proving unique continuation results for the arbitrary system \eqref{1.0.1} using two different points of view. Our results are then applied to some important equations in hydrodynamics, wave propagation and integrable systems. Also, we proved a unique continuation result for the generalised Boussinesq equation \eqref{1.0.4} using the tools developed for tackling the class \eqref{1.0.1}.

\section*{Acknowledgements}

I am grateful to CNPq (grant nº 310074/2021-5) and FAPESP (grant nº 2020/02055-0) for financial support. I must thank the anonymous comments I received, that led me to the references \cite{him-dcds,tiglay}. I am deeply indebted to Professor G. Misiolek for sharing with me the paper \cite{him-dcds}. Most of this work was developed and written while I was a staff member of Universidade Federal do ABC (UFABC), which I gladly thank for all the support I had. Finally, I want to express my gratitude to the Department of Mathematical Sciences of Loughborough University (UK), where the results of section 5 and a moderate revision in the text was carried out.

\end{document}